\begin{document}

\author{\bfseries\large K.~O.~Besov\thanks{Steklov Mathematical Institute
of Russian Academy of Sciences, ul.~Gubkina~8, Moscow, 119991 Russia.\protect\\
E-mail: kbesov@mi.ras.ru}}

\title{On Balder's Existence Theorem\\ for Infinite-Horizon Optimal Control Problems\thanks
{This work is supported by the Russian Science Foundation under
grant~14-50-00005.}}

\makeatletter

\def\@fnsymbol#1{\ensuremath{\ifcase#1\or*\or**\or***\or \,d\or \,e\else\@ctrerr\fi}}
\makeatother

\maketitle

\begin{flushleft}
\end{flushleft}

{\small\leftskip=10mm\rightskip=\leftskip\noindent
Balder's well-known existence theorem (1983) for infinite-horizon optimal
control problems is extended to the case when the integral functional is
understood as an improper integral. Simultaneously, the condition of strong
uniform integrability (over all admissible controls and trajectories) of
the positive part $\max\{f_0,0\}$ of the utility function (integrand)~$f_0$
is relaxed to the requirement that the integrals of~$f_0$ over intervals
$[T,T']$ be uniformly bounded from above by a function $\omega(T,T')$ such
that $\omega(T,T')\to 0$ as $T,T'\to\infty$. This requirement was proposed
by A.V.~Dmitruk and N.V.~Kuz'kina (2005); however, the proof in the present
paper does not follow their scheme but is instead derived in a rather
simple way from the auxiliary results of Balder himself. An illustrative
example is also given.

\medskip MSC2010: 49J15, 49J45

}\smallskip

\newtheorem{teA}{Theorem} \def\theteA{\Alph{teA}}
\newtheorem{teo}{Theorem}
\newtheorem{lem}{Lemma}
\newtheorem{prp}{Proposition}
\theoremstyle{definition}
\newtheorem{rem}{Remark}
\newtheorem{exa}{Example}

\let\eps\varepsilon
\let\al\alpha \let\om\omega \let\Om\Omega \let\la\lambda
\def\tom{\wt\om}
\let\wt\widetilde \let\wh\widehat
\let\qq\qquad
\def\R{\mathbb R} \def\N{\mathbb N}
\def\cA{\mathcal A} \def\cB{\mathcal B} \def\cL{\mathcal L}
\def\cU{\mathcal U}
\def\({\left(}
\def\){\right)}
\def\[{\left[}
\def\]{\right]}
\let\To\rightrightarrows
\def\ACloc{\mathrm{AC}_\textup{loc}}
\def\1{\kern1pt}

\vskip 8mm

One of the most general and well-known results on the existence of
solutions to infinite-horizon optimal control problems was proved by
Balder~\cite{Ba}. Almost all conditions of his theorem are local in time
(i.e., they must hold only at each separate instant of time or on each
finite time interval) and ensure the existence of solutions to similar
problems on finite time intervals. The only condition that regulates the
behavior of the system at infinity is the requirement of strong uniform
integrability of the positive part of the integrand in the objective
functional over all admissible controls and corresponding trajectories.
Later several authors achieved some progress in weakening this condition.

The present paper also contributes to this direction. As an alternative to
Balder's uniform integrability, we use the condition of ``uniform
boundedness of pieces of the objective functional'' proposed by Dmitruk and
Kuz'kina \cite{DK}. Note that they considered a significantly narrower
class of optimal control problems, while for the general case only a scheme
was outlined (without statement of particular results that can be obtained
by following this scheme\footnote
{
 The absence of exact statements to which one could refer when solving
 particular optimal control problems was one of the reasons for writing
 the present note.
}).
So the present paper is in a sense a logical completion of the paper
\cite{DK}. However, we do not follow the scheme proposed in~\cite{DK} but
rather show that the result can be derived from those of Balder himself
\cite{Ba81,Ba} in a fairly simple way.

Recently Bogusz \cite{Bo} also obtained an existence theorem in the case
when the integral functional is understood as an improper integral.
However, one of the hypotheses in her theorem is the existence of a locally
integrable function $\la\colon\R_+\to\nobreak\R$ that has a finite improper
integral $\int_0^\infty\la(t)\,dt$ and bounds from above (from below in the
case of minimization problem) the integrand in the objective functional for
all admissible controls and corresponding trajectories. Such a condition is
essentially stronger (although formally this is not so) than the strong
uniform integrability, because subtracting (adding in the case of
minimization problem) the function~$\la$ from (to) the integrand reduces
the problem to the one with negative (positive) integrand in the objective
functional.

Some results on the existence of optimal solutions under conditions of
different kind and/or in different statements of the problem were obtained
in \cite{As16,Ly}.

Note that existence theorems are an inherent part of the method for solving
optimal control problems based on applying necessary optimality conditions
(see, e.g., \cite{ABKr,As15,As17,Be14,Be15}). Therefore, it is important to
have an existence theorem under hypotheses maximally close to those under
which necessary optimality conditions are valid. At present it is the
condition of ``uniform boundedness of pieces of the objective functional''
that is often required for necessary optimality conditions to be valid
(see, e.g.,~\cite{ABKr,Be14}).

\medskip

Let us proceed to the statement of the problem and formulate the conditions
under which we will study it.

The main object of our study is the optimal control problem
\begin{gather}\label{eq1}
  I(x,u):=\int_0^\infty f_0(t,x(t),u(t))\,dt\to{\max},\\[3pt] \label{eq2}
  \dot x(t)=f(t,x(t),u(t))\qq\text{for a.e. }\, t\in\R_+:=[0,+\infty),\\[5pt] \label{eq3}
  x(t)\in A(t),\qq u(t)\in U(t,x(t))\qq\text{for a.e. }\, t\in\R_+,
\end{gather}
for which the following conditions hold (where $m,n\in\N$ are fixed
dimensions of the control and state vectors, respectively):
\begin{enumerate}[(i)]\itemsep=-1pt
\item $A\colon\R_+\To\R^n$ is a set-valued map with $(\cL\times\cB^n)$-measurable\footnote
{
 That is, lying in the $\sigma$-algebra generated in $\R_+\times\R^n$
 by the Cartesian products of Lebesgue measurable subsets in~$\R_+$
 and Borel subsets in~$\R^n$.
} graph~$\cA$;
\item $U\colon\cA\To\R^m$ is a set-valued map with $(\cL\times\cB^{n+m})$-measurable graph~$\cU$;
\item the functions $f\colon\cU\to\R^n$ and $f_0\colon\cU\to\R\cup\{-\infty\}$ are $(\cL\times\cB^{n+m})$-measurable\footnote
{
 That is, the preimages of Borel sets are $(\cL\times\cB^{n+m})$-measurable.
}.
\end{enumerate}
The set~$\Om$ of \textit{admissible pairs}~$(x,u)$ consists by definition
of pairs of vector functions $x$,~$u$ such that $x\in\ACloc^n(\R_+)$,
$u\colon\R_+\to\R^m$ is a Lebesgue measurable function and conditions
\eqref{eq2} and~\eqref{eq3} hold. Here $\ACloc^n(\R_+)$ is the space of
locally absolutely continuous (i.e., absolutely continuous on each finite
interval) functions $x\colon\R_+\to\R^n$ with the topology indicated
in~\cite{Ba}.

The integral in \eqref{eq1} is understood in~\cite{Ba} in the following
sense:
\begin{equation}\label{+-}
  \int_0^\infty g(t)\,dt:=\int_0^\infty g^+(t)\,dt-\int_0^\infty g^-(t)\,dt,\qq
  \text{where}\quad g^\pm:=\max\{\pm g,0\},
\end{equation}
with the convention\footnote
{
 Here and below, without further mention, we reformulate all the results of \cite{Ba,DK}
 obtained for minimization problems as applied to similar maximization problems.
}
that $(+\infty)-(+\infty)=-\infty$. Thus, the value of the functional
\eqref{+-} (equal to a finite number or~$\pm\infty$) is defined for any
admissible pair.

Fix an $\al\in\R$ and put $\Om_\al:=\{(x,u)\in\Om\mid I(x,u)\ge\al\}$. The
existence of a solution to problem~\eqref{eq1}--\eqref{eq3} is proved
in~\cite{Ba} under the following assumptions:
\begin{enumerate}[(i)]\itemsep=-1pt\setcounter{enumi}{3}
\item the function $f(t,\cdot\1,\cdot\1)$ is continuous on
$\cU(t):=\{(\chi,\upsilon)\in\R^n\times\R^m\mid (t,\chi,\upsilon)\in\cU\}$
for every~$t\in\R_+$;
\item the function $f_0(t,\cdot\1,\cdot\1)$ is upper semicontinuous on~$\cU(t)$ for every $t\in\R_+$;
\item the sets $A(t)$ and $\cU(t)$ are closed for every $t\in\R_+$;
\item the set $\{x(0)\mid (x,u)\in\Om_\al\}$ is bounded;
\item for every $T>0$, the set of functions $F^T_\al:=\{f(\1\cdot\1,x(\cdot),u(\cdot))|_{[0,T]}\mid (x,u)\in\Om_\al\}$
is uniformly integrable on the interval~$[0,T]$, i.e.,
$\inf_{c>0}\sup_{g\in F^T_\al} \int_{C^T_{g,c}}\mathopen|g(t)|\,dt=0$,
where $C^T_{g,c}=\{t\in[0,T]\mid |g(t)|>c\}$;
\item the set $Q(t,\chi):=\{(z^0,z)\in\R\times\R^n\mid z^0\le f_0(t,\chi,\upsilon),\ z=f(t,\chi,\upsilon),\ \upsilon\in U(t,\chi)\}$
is convex for all $(t,\chi)\in\cA$;
\item $Q(t,\chi)=\bigcap_{\delta>0}\mathrm{cl}\bigl(\bigcup_{\chi'\in A(t)\cap B_\delta(\chi)} Q(t,\chi')\bigr)$,
where $B_\delta(\chi)$ is the ball of radius~$\delta$ centered at~$\chi$;
\item the set of functions $F_{0,\al}^+:=\{f_0^+(\1\cdot\1,x(\cdot),u(\cdot))\mid (x,u)\in\Om_\al\}$
is strongly uniformly integrable on~$\R_+$, i.e., $\inf_{h\in L_1(\R_+)}
\sup_{g\in F_{0,\al}^+} \int_{C_{g,h}}\mathopen|g(t)|\,dt=0$, where
$C_{g,h}:=\{t\in\R_+\mid |g(t)|>\nobreak h(t)\}$.
\end{enumerate}

\begin{teA}[{\cite[Theorem~3.6]{Ba}}]\label{tA}
If there is an $\al\in\R$ such that $\Om_\al\ne\varnothing$ and
conditions~\textup{(i)\nobreakdash--(xi)} hold, then in problem
\eqref{eq1}--\eqref{eq3} there exists an admissible pair $(x_*,u_*)\in\Om$
such that\/ $I(x_*,u_*)=\sup_{(x,u)\in\Om}I(x,u)$.
\end{teA}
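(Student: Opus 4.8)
\emph{Proof idea.} The plan is to run the direct method of the calculus of variations, reducing the infinite horizon to Balder's finite-horizon results of \cite{Ba81}. Fix $\al$ with $\Om_\al\ne\varnothing$ and set $\sigma:=\sup_{(x,u)\in\Om}I(x,u)$, so that $\sigma\ge\al$; by (xii) the integrals $\int_0^\infty f_0^+(t,x(t),u(t))\,dt$ are uniformly bounded over $\Om_\al$, hence $\sigma<\infty$. If $\sigma=\al$ then every pair in $\Om_\al$ is already a maximizer, so assume $\sigma>\al$; then any maximizing sequence lies in $\Om_\al$ from some index on, and after passing to its tail we have a maximizing sequence $(x_k,u_k)\in\Om_\al$. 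By (viii) the velocities $\dot x_k|_{[0,T]}=f(\1\cdot\1,x_k,u_k)|_{[0,T]}$ are uniformly integrable on each $[0,T]$, and by (vii) the initial values $x_k(0)$ are bounded, so the Dunford--Pettis criterion together with a diagonal argument yields a subsequence (not relabelled) with $x_k\to x_*$ in $\ACloc^n(\R_+)$ and $\dot x_k\rightharpoonup\dot x_*$ weakly in $L_1([0,T])$ for each $T>0$.

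The heart of the matter is the simultaneous passage to the limit, on each finite interval $[0,T]$, in the state equation and in the payoff. Using the continuity of $f(t,\cdot\1,\cdot\1)$ from (iv), the closedness of $A(t)$ and $\cU(t)$ from (vi), the upper semicontinuity of $f_0(t,\cdot\1,\cdot\1)$ from (v), the convexity of the augmented velocity sets $Q(t,\chi)$ from (x) (which converts weak $L_1$-convergence into the pointwise inclusion one needs, via Mazur's lemma), and the $(Q)$-type closure condition (xi), one obtains --- this is precisely Balder's finite-horizon lower-closure/semicontinuity theorem --- a Lebesgue measurable control $u_*$ with $(x_*,u_*)\in\Om$, $\dot x_*=f(\1\cdot\1,x_*,u_*)$ a.e.\ on $\R_+$, and $\limsup_{k\to\infty}\int_0^T f_0(t,x_k(t),u_k(t))\,dt\le\int_0^T f_0(t,x_*(t),u_*(t))\,dt$ for every fixed $T>0$. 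Applying the same machinery to the lower-semicontinuous integrand $f_0^-$ (for which the relevant set is again convex, being the reflection of $Q(t,\chi)$ intersected with a half-space) and using that $\int_0^\infty f_0^-(t,x_k,u_k)\,dt$ stays bounded along the sequence, one also gets $\int_0^\infty f_0^-(t,x_*,u_*)\,dt<\infty$.

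It remains to control the behaviour at infinity. Given $\eps>0$, condition (xii) furnishes $h\in L_1(\R_+)$ with $\sup_k\int_{C_k}f_0^+(t,x_k,u_k)\,dt\le\eps$, where $C_k:=\{t\in\R_+:\ f_0^+(t,x_k,u_k)>h(t)\}$; hence, for $T$ large enough, $\int_T^\infty f_0^+(t,x_k,u_k)\,dt\le\eps+\int_T^\infty h(t)\,dt$ uniformly in $k$, so that $I(x_k,u_k)\le\int_0^T f_0(t,x_k,u_k)\,dt+\eps+\int_T^\infty h(t)\,dt$. Combining this with the finite-horizon $\limsup$-inequality and letting first $k\to\infty$ and then $T\to\infty$ --- a short computation with the convention in \eqref{+-}, in which the finiteness of $\int_0^\infty f_0^-(t,x_*,u_*)\,dt$ lets $\int_0^T f_0(t,x_*,u_*)\,dt$ converge to $I(x_*,u_*)$ --- gives $\sigma=\lim_k I(x_k,u_k)\le I(x_*,u_*)$; since always $I(x_*,u_*)\le\sigma$, we conclude $I(x_*,u_*)=\sigma\ge\al$, i.e.\ $(x_*,u_*)\in\Om_\al\subset\Om$ is the desired maximizer. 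The main obstacle is the middle step: producing an \emph{admissible} limiting pair while \emph{simultaneously} preserving upper semicontinuity of the payoff on finite horizons, which is exactly what forces the joint use of the convexity hypothesis (x), the closure hypothesis (xi), and a measurable-selection (Filippov-type) theorem; once these finite-horizon facts are in hand, the tail estimate supplied by (xii) is comparatively routine.
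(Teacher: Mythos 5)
First, note that the paper does not prove Theorem~\ref{tA} at all: it is quoted from Balder \cite[Theorem~3.6]{Ba}, and the route consistent with the paper's toolkit is short --- take a maximizing sequence in $\Om_\al$ (nonempty, and $\sup_\Om I=\sup_{\Om_\al}I$ since this sup is $\ge\al$), use (vii), (viii) and \cite[Theorem~2.1]{Ba} to extract a subsequence with $x_k$ converging weakly in $\ACloc^n(\R_+)$ to some $x_0$, note that (xi) is exactly the strong uniform integrability hypothesis of Theorem~\ref{tB}, and apply Theorem~\ref{tB} once, on the whole half-line, to get $(x_0,u_*)\in\Om$ with $I(x_0,u_*)\ge\varlimsup_k I(x_k,u_k)=\sup_\Om I$. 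Your outline starts the same way (maximizing sequence, Dunford--Pettis on each $[0,T]$, diagonal extraction), but then replaces the single application of the infinite-horizon lower-closure theorem by finite-horizon closure on each $[0,T]$ plus a tail estimate, and this is where genuine gaps appear. A finite-horizon lower-closure/selection theorem produces, for each $T$, a control on $[0,T]$ that depends on $T$; your assertion that one obtains \emph{one} measurable $u_*$ with $(x_*,u_*)\in\Om$ satisfying the $\varlimsup$-inequality for \emph{every} $T$ simultaneously is precisely the nontrivial point and is not justified. (Compare the paper's proof of Theorem~\ref{t1}, which must work on the \emph{disjoint} intervals $[N-1,N)$ exactly so that the $T$-dependent selections $u_{N*}$ can be glued; with the nested intervals $[0,T]$ no such gluing is possible.)

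The second gap is the treatment of $f_0^-$. Applying ``the same machinery'' to $f_0^-$ yields a lower-closure statement with \emph{its own} measurable selection, not with the control $u_*$ already fixed in the previous step; it therefore gives no bound on $\int_0^\infty f_0^-(t,x_*(t),u_*(t))\,dt$. Without that bound your final passage fails: from $\varliminf_{T}\int_0^T f_0(t,x_*,u_*)\,dt\ge\sigma$ one cannot conclude $I(x_*,u_*)\ge\sigma$ under convention \eqref{+-}, since both $\int_0^\infty f_0^+$ and $\int_0^\infty f_0^-$ could be infinite, in which case $I(x_*,u_*)=-\infty$ (this is the same phenomenon the paper's Example~\ref{ex1} exploits, with $u_0(t)=\cos t$). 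Handling the positive and negative parts jointly on the infinite horizon is exactly what \cite[Theorem~3.2]{Ba} (Theorem~\ref{tB}) is for, and it should be invoked directly rather than reassembled from finite-horizon pieces. Finally, your condition labels are shifted by one throughout: convexity of $Q(t,\chi)$ is (ix), the Cesari-type closure condition is (x), and strong uniform integrability is (xi); the condition you repeatedly cite as ``(xii)'' is, in this paper, the Dmitruk--Kuz'kina boundedness-of-pieces condition, which is \emph{not} among the hypotheses of Theorem~\ref{tA} --- what your argument actually uses there (an $h\in L_1(\R_+)$ with small excess integral) is condition (xi).
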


As was already mentioned, the only condition in Theorem~\ref{tA} that
regulates the behavior of system~\eqref{eq1}--\eqref{eq3} at infinity is
condition~(xi). At the same time, in many optimal economic growth problems
it seems more natural to define the value of the objective functional not
in the sense of~\eqref{+-} but rather in the limit sense
\begin{equation}\label{J}
  J(x,u):=\lim_{T\to+\infty}\int_0^T f_0(t,x(t),u(t))\,dt
\end{equation}
provided that the limit exists (see, e.g., \cite{ABKr,Bo}). We will also
follow this definition, in which case problem \eqref{eq1}--\eqref{eq3} is
replaced by the problem
\begin{equation}\label{eq1'}
  J(x,u)\to{\max}
\end{equation}
subject to conditions \eqref{eq2} and \eqref{eq3}.

\begin{rem}\label{r1}
It is clear that if the value of the functional $I(x,u)$ is finite for an
admissible pair~$(x,u)$, then $J(x,u)=I(x,u)$.
\end{rem}

As noticed in \cite{DK}, instead of condition~(xi) one can consider the
condition
\begin{enumerate}
\item[(xii)] $\varlimsup_{T\to+\infty}\sup_{T'>T}\sup_{(x,u)\in\Om}\int_T^{T'}\!f_0(t,x(t),u(t))\,dt\le 0$.
\end{enumerate}
It is easy to see that for admissible pairs in~$\Om_\al$ condition~(xii) is
weaker\footnote
{\label{footn}%
 On the whole it would be incorrect to say that condition~(xii) is weaker than condition~(xi),
 because condition~(xii) is considered for the set~$\Om$ while condition~(xi) is considered
 only for the subset $\Om_\al\subset\Om$. Therefore, formally none of the conditions implies the other.
}
than condition~(xi). Indeed,
\begin{multline*}
  \varlimsup_{T\to+\infty}\,\sup_{T'>T}\,\sup_{(x,u)\in\Om_\al}\int_T^{T'}f_0(t,x(t),u(t))\,dt
  \le\varlimsup_{T\to+\infty}\,\sup_{T'>T}\,\sup_{g\in F_{0,\al}^+}\int_T^{T'}g(t)\,dt \\
  \le\inf_{h\in L_1(\R_+)}\,\varlimsup_{T\to+\infty}\,\sup_{T'>T}\,\sup_{g\in F_{0,\al}^+}
    \left(\int_{[T,T']\cap C_{g,h}}g(t)\,dt+\int_{[T,T']\setminus C_{g,h}}h(t)\,dt\right) \\
  \le \inf_{h\in L_1(\R_+)}\,\varlimsup_{T\to+\infty}\,\sup_{T'>T}\,\sup_{g\in F_{0,\al}^+}\int_{C_{g,h}}g(t)\,dt+0
  = \inf_{h\in L_1(\R_+)}\,\sup_{g\in F_{0,\al}^+}\int_{C_{g,h}}g(t)\,dt.
\end{multline*}

However, below we will still need a local version of condition~(xi),
namely,
\begin{enumerate}
\item[(xi$'$)] for every $T>0$, the set of functions $F_0^{T,+}:=\{f_0^+(\1\cdot\1,x(\cdot),u(\cdot))|_{[0,T]}\mid (x,u)\in\Om\}$
is uniformly integrable on~$[0,T]$.
\end{enumerate}
(In \cite{DK}, due to the continuity and compact-valuedness of the
functions and set-valued maps considered there, condition~(xi$'$) holds
automatically.)

Let us make the following important observation.

\begin{prp}\label{a1}
Under condition~\textup{(xi$'$),} condition~\textup{(xii)} is equivalent to
each of the following conditions\textup:
\begin{enumerate}
\item[\upshape(xii$'$)] there is a continuous function $\om\colon\R_+^2\to\R_+$ such that
$\om(T,T')\to 0$ as $T,T'\to\infty$ and
$$
  \sup_{(x,u)\in\Om}\int_T^{T'}f_0(t,x(t),u(t))\,dt\le\om(T,T')\qq\forall T,T'\colon\ T'>T\ge0;
$$
\item[\upshape(xii$''$)] there is a continuous function $\tom\colon\R_+\to\R_+$ such that
$\tom(T)\to 0$ as $T\to\infty$ and
$$
  \sup_{T'>T}\,\sup_{(x,u)\in\Om}\int_T^{T'}f_0(t,x(t),u(t))\,dt\le\tom(T)\qq\forall T\ge0.
$$
\end{enumerate}
\end{prp}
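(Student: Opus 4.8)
The plan is to establish the cyclic chain of implications (xii) $\Rightarrow$ (xii$''$) $\Rightarrow$ (xii$'$) $\Rightarrow$ (xii). The last two links are elementary and do not use~(xi$'$). For (xii$''$) $\Rightarrow$ (xii$'$) one takes $\om(T,T'):=\tom(T)$: this function is continuous, nonnegative, tends to~$0$ as $T,T'\to\infty$, and majorizes $\sup_{(x,u)\in\Om}\int_T^{T'}f_0$ because the latter does not exceed $\sup_{T''>T}\sup_{(x,u)\in\Om}\int_T^{T''}f_0\le\tom(T)$. For (xii$'$) $\Rightarrow$ (xii), the inequality in~(xii$'$) gives $\sup_{T'>T}\sup_{(x,u)\in\Om}\int_T^{T'}f_0\le\sup_{T'>T}\om(T,T')$, and joint convergence $\om(T,T')\to0$ means that for every $\eps>0$ there is an $N$ with $\om(T,T')<\eps$ whenever $T,T'\ge N$; hence $\sup_{T'>T}\om(T,T')\le\eps$ for all $T\ge N$, so the upper limit in~(xii) is $\le\eps$ for every $\eps>0$, i.e.\ it is $\le0$.

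The substantive implication is (xii) $\Rightarrow$ (xii$''$), and this is where hypothesis~(xi$'$) is used. Write $\varphi(T):=\sup_{T'>T}\sup_{(x,u)\in\Om}\int_T^{T'}f_0(t,x(t),u(t))\,dt$ for $T\ge0$, so that (xii) reads $\varlimsup_{T\to\infty}\varphi(T)\le0$ and the goal is a continuous $\tom\colon\R_+\to\R_+$ with $\tom(T)\to0$ and $\varphi\le\tom$. First I would show that $\varphi$ is bounded above on $\R_+$; a priori it could equal $+\infty$ for small $T$, which is exactly what (xi$'$) forbids. Choose $T_0>0$ with $\varphi(T)\le1$ for all $T\ge T_0$ (possible by~(xii)). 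By~(xi$'$), uniform integrability of $F_0^{T_0,+}$ on $[0,T_0]$ provides a $c>0$ such that $\int_{\{t\in[0,T_0]\,:\,g(t)>c\}}g\,dt\le1$ for every $g\in F_0^{T_0,+}$, whence $M_0:=\sup_{g\in F_0^{T_0,+}}\int_0^{T_0}g\,dt\le cT_0+1<\infty$. Now a routine three-case estimate of $\int_T^{T'}f_0$ for $T'>T\ge0$ --- according as $T'\le T_0$, $T<T_0<T'$, or $T_0\le T$, using $f_0\le f_0^+$ together with $\int_0^{T_0}f_0^+\,dt\le M_0$ and $\int_{T_0}^{T'}f_0\le\varphi(T_0)\le1$ --- gives $\varphi(T)\le M_0+1$ for all $T\ge0$.

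It then remains to replace $\varphi$ by a continuous majorant vanishing at infinity. Put $\rho(T):=\sup_{s\ge T}\max\{\varphi(s),0\}$; then $\rho$ is nonincreasing, $0\le\rho\le M_0+1$, $\rho\ge\varphi$, and $\rho(T)\to0$, since $\sup_{s\ge T}\varphi(s)$ decreases to $\varlimsup_{s\to\infty}\varphi(s)\le0$. A nonincreasing nonnegative function that tends to~$0$ is dominated by a continuous one of the same type; concretely, let $\tom$ be piecewise linear, equal to $\rho(k-1)$ at each integer $k\ge1$ and to the constant $\rho(0)$ on $[0,1]$. Then on every interval $[k,k+1]$ with $k\ge1$ the graph of $\tom$ lies between the heights $\rho(k-1)$ and $\rho(k)$, so there $\tom(T)\ge\rho(k)\ge\rho(T)\ge\varphi(T)$ and $\tom(T)\le\rho(k-1)\to0$; thus $\tom\colon\R_+\to\R_+$ is continuous, $\tom(T)\to0$, and $\sup_{T'>T}\sup_{(x,u)\in\Om}\int_T^{T'}f_0=\varphi(T)\le\tom(T)$, which is~(xii$''$). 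The main obstacle is this last passage: one must first secure the upper bound on~$\varphi$ (which genuinely needs~(xi$'$) --- without it $\varphi$ need not be finite near the origin), and then choose the continuous envelope with some care, since the naive piecewise-linear interpolation of~$\rho$ through its values $\rho(k)$ at the integers can drop below~$\rho$ on a subinterval; shifting the nodes as above repairs this.
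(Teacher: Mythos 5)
Your proof is correct and follows essentially the same route as the paper: the same cyclic chain of implications, the same use of~(xi$'$) to bound the supremum function near the origin (via the split at~$T_0$ and the uniform-integrability constant~$c$), and the same monotone nonincreasing envelope tending to zero. The only difference is cosmetic: for the final continuous majorant the paper takes the sliding average $\tom(T):=\int_{T-1}^T\wh\om_1(t^+)\,dt$, while you use a shifted piecewise-linear interpolation; both work equally well.
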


\begin{proof}
Clearly, condition~(xii$''$) implies condition~(xii$'$) (it suffices to
take $\om(T,T'):=\tom(T)$) and condition~(xii$'$) implies condition~(xii)
(for $\varlimsup_{T\to\infty}\sup_{T'>T}$ is the same as
$\varlimsup_{T,T'\to\infty,\;T'>T}$, while the latter does not exceed
$\varlimsup_{T,T'\to\infty}$). Let us show that condition~(xii) implies
condition~(xii$''$). Put
$$
  \wh\om(T):=\left(\sup_{T'>T}\sup_{(x,u)\in\Om}\int_T^{T'}\!f_0(t,x(t),u(t))\,dt\right)^{\!+},\qq T\ge 0.
$$
Due to condition~(xii) we have $\lim_{T\to\infty}\wh\om(T)=0$. Therefore,
there is a~$T_1$ such that $\wh\om(T)\le 1$ for $T\ge T_1$. Let us show
that this function is bounded for all~$T\ge 0$. For $T<T_1$ we have
$$
  \wh\om(T)\le \sup_{(x,u)\in\Om}\int_0^{T_1}\!f_0^+(t,x(t),u(t))\,dt+\wh\om(T_1)
  \le \inf_{c>0}\,\sup_{g\in F_0^{T_1,+}}\left(\int_{C^{T_1}_{g,c}} g(t)\,dt+cT_1\right)+1.
$$
Due to condition~(xi$'$) there is a constant $c_1>0$ such that
$$
  \sup_{g\in F_0^{T_1,+}}\int_{C^{T_1}_{g,c_1}} g(t)\,dt\le 1.
$$
Then $\wh\om(T)\le c_1T_1+2$ for all $T\ge 0$.

Put $\wh\om_1(T):=\sup_{T'\ge T}\wh\om(T')$ for $T\ge 0$. Then $\wh\om_1$
is a bounded monotonically nonincreasing function on~$\R_+$ that tends to
zero as $T\to\infty$.

Finally, put $\tom(T):=\int_{T-1}^T\wh\om_1(t^+)\,dt$ (recall that
$t^+=\max\{t,0\}$). It is clear that $\tom$ is a continuous function
on~$\R_+$ that satisfies all requirements formulated in
condition~(xii$''$).
\end{proof}

One of the important corollaries to condition~(xii) is that the value of
the functional~$J(\cdot,\cdot)$ is defined on any admissible pair. For
completeness, we will give a slightly shorter proof of this fact than
in~\cite{DK}.

\begin{prp}
Under conditions~\textup{(xi$'$)} and~\textup{(xii),} the value of the
functional~$J(x,u)$ is defined for every admissible pair $(x,u)\in\Om$ and
is equal to either a finite number or~$-\infty$.
\end{prp}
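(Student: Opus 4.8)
The plan is to study the partial-integral function $\phi(T):=\int_0^T f_0(t,x(t),u(t))\,dt$, $T\ge0$, and to show that it has a limit in $[-\infty,+\infty)$ as $T\to+\infty$. First I would record that $\phi$ is well defined: by condition~(xi$'$) the integral $\int_0^T f_0^+(t,x(t),u(t))\,dt$ is finite for each $T$, so $\phi(T)\in[-\infty,+\infty)$, and the additivity $\phi(T')=\phi(T)+\int_T^{T'}f_0(t,x(t),u(t))\,dt$ holds for all $T'>T$ with no $(+\infty)-(+\infty)$ indeterminacy (the positive parts always contribute a finite amount). Next, since conditions~(xi$'$) and~(xii) are assumed, Proposition~\ref{a1} provides condition~(xii$''$): there is a continuous $\tom\colon\R_+\to\R_+$ with $\tom(T)\to0$ as $T\to\infty$ and $\int_T^{T'}f_0(t,x(t),u(t))\,dt\le\tom(T)$ for all $T'>T\ge0$. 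Combining this with additivity yields the key one-sided estimate $\phi(T')\le\phi(T)+\tom(T)$ for all $T'>T\ge0$.

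Then I would introduce $g(T):=\sup_{T'\ge T}\phi(T')$, which is nonincreasing in~$T$; from the estimate above together with $\tom\ge0$ one obtains the sandwich $\phi(T)\le g(T)\le\phi(T)+\tom(T)$ for all $T\ge0$. Evaluating at $T=0$ gives $0=\phi(0)\le g(0)\le\tom(0)<+\infty$, so $g$ is bounded from above and, being nonincreasing, has a limit $\Lambda:=\lim_{T\to\infty}g(T)=\inf_{T\ge0}g(T)\in[-\infty,\tom(0)]$. Finally, letting $T\to\infty$ in $g(T)-\tom(T)\le\phi(T)\le g(T)$ and using $\tom(T)\to0$ squeezes $\phi(T)\to\Lambda$ (when $\Lambda=-\infty$ the upper bound $\phi(T)\le g(T)$ alone forces $\phi(T)\to-\infty$). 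Hence $J(x,u)=\Lambda$ exists and is either a finite number or~$-\infty$, since $\Lambda\le\tom(0)<+\infty$.

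I do not expect a genuine obstacle: once (xii$''$) is in hand the argument is short. The only point requiring care is the bookkeeping with the value $-\infty$ — making sure $\phi$ is well defined despite $f_0$ possibly equalling $-\infty$, that the additivity and the inequality $\phi(T')\le\phi(T)+\tom(T)$ remain valid in that case, and that the concluding squeeze still produces the right answer when $\Lambda=-\infty$. All of this is handled by the single observation that (xi$'$) keeps $\int_0^T f_0^+$ finite, so $-\infty$ can enter the computations only additively and monotonically. (A routine measurability point — that $t\mapsto f_0(t,x(t),u(t))$ is measurable on each $[0,T]$ — is standard in this setting and is already implicitly used in stating conditions~(xi) and~(xi$'$).)
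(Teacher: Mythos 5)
Your proof is correct and rests on the same idea as the paper's: the uniform tail bound supplied by (xii)/(xii$''$) forces the partial integrals $\int_0^T f_0\,dt$ to converge and caps the limit by $\tom(0)$, which excludes $+\infty$. The paper phrases this as a direct $\varlimsup\le\varliminf$ comparison obtained by splitting the integral at $T_1$ and applying (xii), while you repackage the same estimate as a squeeze with the monotone envelope $\sup_{T'\ge T}\int_0^{T'}f_0\,dt$ after passing to (xii$''$) via Proposition~1 --- a difference of bookkeeping only.
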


\begin{proof}
The existence of a limit in \eqref{J} follows from the estimate
\begin{multline*}
  \varlimsup_{T\to+\infty}\int_0^T f_0(t,x(t),u(t))\,dt
  = \varliminf_{T_1\to+\infty}\,\varlimsup_{T\to+\infty}\left(\int_0^{T_1}+\int_{T_1}^T\right) f_0(t,x(t),u(t))\,dt \\
  \le \varliminf_{T_1\to+\infty}\int_0^{T_1}f_0(t,x(t),u(t))\,dt
    +\varlimsup_{T_1\to+\infty}\,\sup_{T>T_1}\int_{T_1}^T f_0(t,x(t),u(t))\,dt \\
  \le \varliminf_{T\to+\infty}\int_0^T f_0(t,x(t),u(t))\,dt,
\end{multline*}
where we have used condition~(xii) at the last step. At the same time, the
limit does not exceed $\tom(0)$ for a continuous function
$\tom\colon\R_+\to\R_+$.
\end{proof}

Now we formulate our main result. By analogy with the set~$\Om_\al$, we
introduce the set $\wt\Om_\al:=\{(x,u)\in\Om\mid J(x,u)\ge\al\}$ for
$\al\in\R$.

\begin{teo}\label{t1}
If there is an $\al\in\R$ such that $\wt\Om_\al\ne\varnothing$ and
conditions~\textup{(i)--(x), (xi$'$)} and~\textup{(xii)}
\textup(or~\textup{(xii$'$),} or~\textup{(xii$''$))} hold with~$\Om_\al$
replaced by~$\wt\Om_\al$, then in problem \eqref{eq1'}, \eqref{eq2},
\eqref{eq3} there exists an admissible pair $(x_*,u_*)\in\Om$ such that
$J(x_*,u_*)=\sup_{(x,u)\in\Om}J(x,u)$.
\end{teo}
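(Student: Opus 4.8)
The plan is to split the time axis and treat the two parts by different means: on each finite interval $[0,T]$ I would use only the ``local-in-time'' part of Balder's argument, and the tail $[T,+\infty)$ I would control uniformly over all admissible pairs by condition~(xii), which by Proposition~\ref{a1} may be assumed in the form~(xii$''$) with a continuous $\tom\colon\R_+\to\R_+$ such that $\tom(T)\to0$.

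First I would record that $s:=\sup_{(x,u)\in\Om}J(x,u)$ is finite: indeed $s\ge\al$ since $\wt\Om_\al\ne\varnothing$, and $s\le\tom(0)$ since, by~(xii$''$), $J(x,u)=\lim_{T'\to+\infty}\int_0^{T'}f_0(t,x(t),u(t))\,dt\le\tom(0)$ for every $(x,u)\in\Om$ (here I also use that $J$ is well defined on $\Om$, as established above). If $s=\al$, every pair of $\wt\Om_\al$ is a solution and there is nothing to prove, so I may assume $s>\al$ and fix a maximizing sequence $(x_k,u_k)\in\Om$ with $J(x_k,u_k)\to s$; discarding finitely many terms, $(x_k,u_k)\in\wt\Om_\al$ for all~$k$, so that conditions~(vii),~(viii) apply to this sequence.

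Next I would feed this sequence into the auxiliary results of Balder \cite{Ba,Ba81}. Under the local conditions~(i)--(x), his compactness and closure statements yield, after passing to a subsequence (not relabeled), a limit $x_*$ with $x_k\to x_*$ in $\ACloc^n(\R_+)$ and a Lebesgue measurable control $u_*$ such that $(x_*,u_*)\in\Om$; and, adding condition~(xi$'$), his upper-semicontinuity result for finite-horizon integral functionals gives, for each fixed $T>0$,
\begin{equation*}
  \varlimsup_{k\to\infty}\int_0^T f_0(t,x_k(t),u_k(t))\,dt\le\int_0^T f_0(t,x_*(t),u_*(t))\,dt .
\end{equation*}
On the other hand, condition~(xii$''$) gives, for every $k$ and every $T>0$, that $\int_0^T f_0(t,x_k(t),u_k(t))\,dt=J(x_k,u_k)-\lim_{T'\to+\infty}\int_T^{T'}f_0(t,x_k(t),u_k(t))\,dt\ge J(x_k,u_k)-\tom(T)$. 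Combining the two, I would get $\int_0^T f_0(t,x_*(t),u_*(t))\,dt\ge\varlimsup_{k\to\infty}\bigl(J(x_k,u_k)-\tom(T)\bigr)=s-\tom(T)$ for every $T>0$; then, letting $T\to+\infty$ and using once more that $J(x_*,u_*)=\lim_{T\to+\infty}\int_0^T f_0(t,x_*(t),u_*(t))\,dt$ exists and that $\tom(T)\to0$, I would conclude $J(x_*,u_*)\ge s$, whence $J(x_*,u_*)=s=\sup_{(x,u)\in\Om}J(x,u)$.

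The only step requiring genuine care, rather than a reference, is the invocation of Balder's auxiliary results: I would have to check that his closure theorem and his finite-horizon upper-semicontinuity lemma really do hold under conditions~(i)--(x) and~(xi$'$) \emph{alone}, i.e. without the global strong uniform integrability~(xi). This should be exactly the case, because in Balder's own proof of Theorem~\ref{tA} condition~(xi) enters only at the very last step --- precisely to pass from the finite-horizon integrals to the integral over $\R_+$ --- and it is that step which is here replaced by the tail estimate coming from~(xii$''$). Everything else (the reduction to a maximizing sequence lying in $\wt\Om_\al$, and the two passages to the limit) is routine.
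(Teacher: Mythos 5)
Your overall architecture (maximizing sequence in $\wt\Om_\al$, weak $\ACloc$-limit $x_*$, finite-horizon upper semicontinuity plus the tail bound $\int_0^T f_0(t,x_k(t),u_k(t))\,dt\ge J(x_k,u_k)-\tom(T)$ from~(xii$''$), then $T\to\infty$) matches the paper's strategy, and your final limiting argument is essentially the paper's chain of inequalities. But there is a genuine gap at the step you dismiss as a matter of ``checking references'': you assert that Balder's auxiliary results produce a \emph{single} measurable control $u_*$ with $(x_*,u_*)\in\Om$ such that, for \emph{every} $T>0$, $\varlimsup_k\int_0^T f_0(t,x_k(t),u_k(t))\,dt\le\int_0^T f_0(t,x_*(t),u_*(t))\,dt$. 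No such statement is among Balder's results. His closure theorem (Theorem~\ref{tB}, i.e.\ \cite[Theorem~3.2]{Ba}) yields, for a given functional, \emph{one} control realizing the upper-semicontinuity inequality for \emph{that} functional; applied to the problem truncated to $[0,T]$ it gives a control $u_*^T$ depending on $T$, and the controls for different horizons need not be consistent. The inequality is emphatically not automatic for an arbitrary control generating $x_*$: the measurable selection hidden in Theorem~\ref{tB} must simultaneously reproduce $\dot x_*$ and attain the required utility level inside $Q(t,x_*(t))$, so you cannot first fix $u_*$ from a ``closure statement'' and then expect the finite-horizon inequalities to hold for it.

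This is exactly the difficulty the paper's proof is built to overcome: it partitions $\R_+$ into the intervals $[N-1,N)$, applies Theorem~\ref{tB} to the truncated functional $I_N$ (whose integral involves only $[N-1,N)$), obtains controls $u_{N*}$, and glues them into $u_*$; since $I_N$ depends only on the restriction of the control to $[N-1,N)$, the glued control inherits every inequality $I_N(x_0,u_*)\ge\varlimsup_k I_N(x_k,u_k)$, and then $\sum_{N=1}^K\varlimsup_k I_N(x_k,u_k)\ge\varlimsup_k\int_0^K f_0\ge\varlimsup_k\bigl(J(x_k,u_k)-\tom(K)\bigr)$ closes the argument. Moreover, even this truncation is not a routine verification of Balder's hypotheses: condition~(x) for the truncated integrand outside $[N-1,N)$ is problematic (the author states he is not even sure it holds), and the paper circumvents it by modifying the dynamics, constraints, and sequence outside $[N-1,N)$ (setting $f=0$, $A(t)=\R^n$, $U(t,\cdot)=\{0\}$, $u_k=0$, $x_k$ constant there) before invoking Theorem~\ref{tB}. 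Your closing remark that condition~(xi) ``enters only at the very last step'' of Balder's proof does not substitute for this construction; as written, your proposal assumes precisely the lemma (one control, all horizons at once) whose proof is the substance of the theorem.
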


The main role in the proof is played by another result of Balder.

\begin{teA}[{\cite[Theorem~3.2]{Ba}}]\label{tB}
Suppose conditions~\textup{(i)--(vi), (ix)} and~\textup{(x)} hold. Suppose
also that $\{(x_k,u_k)\}_{k=1}^\infty$ is a sequence in~$\Om$ such that the
sequence $\{x_k\}_{k=1}^\infty$ converges weakly\/\footnote
{
 For a definition of the weak convergence in $\ACloc^n(\R_+)$, see \cite{Ba}.
}
to a function $x_0\in\ACloc^n(\R_+)$ and the set of functions
$\{f_0^+(\1\cdot\1,x_k(\cdot),u_k(\cdot))\}_{k=1}^\infty$ is strongly
uniformly integrable on~$\R_+$. Then there exists a Lebesgue measurable
function $u_*\colon\R_+\to\R^m$ such that $(x_0,u_*)\in\Om$ and
$$
  I(x_0,u_*)\ge\varlimsup_{k\to\infty}I(x_k,u_k).
$$
\end{teA}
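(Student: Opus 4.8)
The plan is to prove Theorem~\ref{t1} by a truncation argument that transfers the problem for the improper functional $J$ to a family of problems for the finite-value functional $I$, to which Balder's lower-closure result Theorem~\ref{tB} applies directly. The central idea is to exploit the uniform tail bound: condition~(xii$'$) furnishes a continuous $\om(T,T')\to0$ controlling $\sup_{(x,u)\in\Om}\int_T^{T'}f_0\,dt$, and in particular the positive tails of $f_0$ are uniformly small over the \emph{entire} admissible set $\Om$, not merely over a sublevel set. This is exactly what is needed to verify the strong uniform integrability hypothesis of Theorem~\ref{tB}, which is the one genuinely global condition that Theorem~\ref{tB} requires.

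First I would pick a maximizing sequence $\{(x_k,u_k)\}\subset\Om$ for $J$, i.e.\ $J(x_k,u_k)\to s:=\sup_\Om J$; after discarding finitely many terms we may assume $J(x_k,u_k)\ge\al$, so that $(x_k,u_k)\in\wt\Om_\al$ for all $k$. Next I would extract a weakly convergent subsequence $x_k\To x_0$ in $\ACloc^n(\R_+)$. This is where conditions~(vii) (boundedness of initial values on $\wt\Om_\al$) and~(viii)/(xi$'$) (local uniform integrability of the velocities $f(\cdot,x_k,u_k)$ on each $[0,T]$) enter: together with the a.e.\ differential equation~\eqref{eq2}, they give equiboundedness and local equi-absolute-continuity of the $x_k$, whence the compactness of the embedding into $\ACloc^n(\R_+)$ (in the sense of the topology of~\cite{Ba}) yields the weak limit $x_0$. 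The hard part will be verifying the remaining hypothesis of Theorem~\ref{tB}, namely strong uniform integrability of $\{f_0^+(\cdot,x_k(\cdot),u_k(\cdot))\}$ on all of $\R_+$: I would obtain this by combining the local uniform integrability from condition~(xi$'$) on each bounded interval with the uniform smallness of the positive tails supplied by~(xii$'$), assembling an explicit dominating function $h\in L_1(\R_+)$ from the local integrands plus a tail contribution controlled by $\om$.

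With all hypotheses of Theorem~\ref{tB} in force, that theorem produces a measurable $u_*$ with $(x_0,u_*)\in\Om$ and
$$
  I(x_0,u_*)\ge\varlimsup_{k\to\infty}I(x_k,u_k).
$$
The remaining step is to convert this inequality about $I$ into the desired inequality about $J$. Here I would invoke Remark~\ref{r1} together with the two preceding propositions: under~(xi$'$) and~(xii) the functional $J$ is everywhere defined and takes values in $\R\cup\{-\infty\}$, and whenever $I$ is finite it agrees with $J$. I would argue that $J(x_k,u_k)=I(x_k,u_k)$ along the maximizing sequence (their values are finite, being bounded below by $\al$ and above by $\tom(0)$), so $\varlimsup I(x_k,u_k)=s$; and that $J(x_0,u_*)\ge I(x_0,u_*)$, which follows because by definition~\eqref{J} and~\eqref{+-} the limit functional $J$ always dominates $I$ (the improper limit is at least the difference of integrals, since the negative part contributes the same while the positive part can only be larger in the limit). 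Chaining these gives $J(x_0,u_*)\ge I(x_0,u_*)\ge s=\sup_\Om J$, while $J(x_0,u_*)\le\sup_\Om J$ by admissibility; hence equality, and $(x_*,u_*):=(x_0,u_*)$ is the sought optimal pair.

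I expect the main obstacle to be the clean construction of the global dominating function for strong uniform integrability, since one must patch the $h$ obtained locally on an exhausting sequence of intervals $[0,T_j]$ into a single $L_1(\R_+)$ function while keeping the tail error controlled uniformly in $k$ by~$\om$; the finite-value bookkeeping in the final $I$-to-$J$ comparison is routine once the two earlier propositions are in hand.
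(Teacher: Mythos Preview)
Your proposal is a proof sketch for Theorem~\ref{t1}, not for Theorem~\ref{tB} (which is quoted from~\cite{Ba} and not proved in the paper). Comparing it to the paper's proof of Theorem~\ref{t1}, your route is genuinely different: you attempt a \emph{single} global application of Theorem~\ref{tB}, whereas the paper applies Theorem~\ref{tB} separately on each interval $[N-1,N)$ and then glues the resulting controls $u_{N*}$.

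There is a real gap in your approach. You claim that the strong uniform integrability of $\{f_0^+(\cdot,x_k(\cdot),u_k(\cdot))\}$ on all of $\R_+$ follows from~(xi$'$) locally together with the ``uniform smallness of the positive tails supplied by~(xii$'$)''. But condition~(xii$'$) bounds $\int_T^{T'} f_0$, not $\int_T^{T'} f_0^+$; when $f_0^+$ and $f_0^-$ are both large and cancel, the signed integral can be small while $\int_T^{T'} f_0^+$ is arbitrarily large. Example~\ref{ex1} in the paper is designed to exhibit exactly this: there (xii$''$) holds, yet for every $\al<\al_*$ one can build admissible pairs in $\wt\Om_\al$ (indeed, a maximizing sequence) with $\int_0^\infty f_0^+ = +\infty$, so no dominating $h\in L_1(\R_+)$ exists and the hypothesis of Theorem~\ref{tB} fails. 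Consequently your downstream claim that $I(x_k,u_k)=J(x_k,u_k)$ along the maximizing sequence also breaks: one may have $\int f_0^+=\int f_0^-=\infty$, hence $I(x_k,u_k)=-\infty$ while $J(x_k,u_k)$ is finite.

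This is precisely why the paper avoids a global invocation of Theorem~\ref{tB}. On each finite interval $[N-1,N)$ condition~(xi$'$) \emph{does} give the required (strong) uniform integrability of $f_0^+$, so Theorem~\ref{tB} yields a control $u_{N*}$ with $I_N(x_0,u_{N*})\ge\varlimsup_k I_N(x_k,u_k)$. Summing over $N\le K$ and using~(xii$''$) to bound the tail $\sum_{N>K}I_N(x_k,u_k)\le\tom(K)$ produces the inequality for $J$ directly, without ever needing control of $f_0^+$ on an unbounded interval. The paper also has to patch the truncated problem outside $[N-1,N)$ so that conditions~(ix)--(x) remain verifiable, a technical point your global strategy would have sidestepped had it worked.
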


\begin{proof}[Proof of Theorem \ref{t1}]
Let $\{(x_k,u_k)\}_{k=1}^\infty$ be a maximizing sequence for
$J(\1\cdot\1,\cdot\1)$ from~$\wt\Om_\al$. Due to conditions~(vii),~(viii)
(with~$\Om_\al$ replaced by~$\wt\Om_\al$) and Theorem~2.1 from \cite{Ba},
the sequence $\{x_k\}_{k=1}^\infty$ contains a subsequence that converges
weakly to some function $x_0\in\ACloc^n(\R_+)$. Let us pass to this
subsequence, denoting it again by~$\{(x_k,u_k)\}_{k=1}^\infty$.

For $N\in\N$ introduce the function
$$
  f_0^N(t,\chi,\upsilon):=\begin{cases} f_0(t,\chi,\upsilon),& t\in[N-1,N),\ (t,\chi,\upsilon)\in\cU,\\
    0,& t\in\R_+\setminus[N-1,N),\ (t,\chi,\upsilon)\in\cU,
  \end{cases}
$$
and consider problem \eqref{eq1}--\eqref{eq3} with $f_0^N$ instead
of~$f_0$. Denote the corresponding functional (in which the integral is
actually taken over the interval~$[N-1,N)$) by~$I_N$. Assume first that for
every $N\in\N$ all hypotheses of Theorem \ref{tB} hold for this truncated
problem (with the objective functional~$I_N$) and for our sequence
$\{(x_k,u_k)\}_{k=1}^\infty$. Then there exists a Lebesgue measurable
function $u_{N*}\colon\R_+\to\R^m$ such that $(x_0,u_{N*})\in\Om$ and
$$
  I_N(x_0,u_{N*})\ge\varlimsup_{k\to\infty}I_N(x_k,u_k).
$$
Put $u_*(t):=u_{N*}(t)$ for $t\in[N-1,N)$, $N\in\N$. Clearly,
$(x_0,u_*)\in\Om$ and
\begin{multline*}
  J(x_0,u_*)=\lim_{K\to\infty}\sum_{N=1}^{K}I_N(x_0,u_*)
  =\lim_{K\to\infty}\sum_{N=1}^{K}I_N(x_0,u_{N*})
  \ge\lim_{K\to\infty}\sum_{N=1}^{K}\varlimsup_{k\to\infty} I_N(x_k,u_k)\ge\\
  \ge\lim_{K\to\infty}\,\varlimsup_{k\to\infty} \sum_{N=1}^{K}I_N(x_k,u_k)
  \ge\lim_{K\to\infty}\,\varlimsup_{k\to\infty}\bigl(J(x_k,u_k)-\tom(K)\bigr)
  =\varlimsup_{k\to\infty}J(x_k,u_k),
\end{multline*}
where $\tom$ is a function from condition~(xii$''$).

Thus, $(x_0,u_*)$ is the required admissible pair. It remains to explain
why the conclusion of Theorem~\ref{tB} holds for the truncated problem with
functional~$I_N$. Among all hypotheses of Theorem~\ref{tB}, only
conditions~(ix) and~(x) need to be checked for $t\notin[N-1,N)$. The
validity of condition~(ix) follows from the fact that a projection of a
convex set is convex. However, the validity of condition~(x) is fairly
difficult to prove (moreover, we are even not sure that it takes place).

To overcome this difficulty, we proceed as follows. Note that in the above
reasoning the values of~$u_{N*}(t)$ are used only for $t\in[N-1,N)$.
Therefore, we can arbitrarily vary the
sequence~$\{(x_k,u_k)\}_{k=1}^\infty$ and the parameters of problem
\eqref{eq1}--\eqref{eq3} outside the interval~$[N-1,N)$. In particular, we
can set $f(t,\cdot\1,\cdot\1)=0$, $A(t)=\R^n$, $U(t,\cdot\1)=\{0\}$ and
$u_k(t)=0$ for $t\notin[N-1,N)$, as well as $x_k(t)=x_k(N-1)$ for $0\le
t<N-1$ and $x_k(t)=x_k(N)$ for $t\ge N$. For the problem thus modified
(still with the functional~$I_N$) the validity of all hypotheses of
Theorem~\ref{tB} is undoubtable, and we get the desired function~$u_{N*}$
on~$[N-1,N)$.
\end{proof}

\begin{rem}
From the formal point of view, Theorem \ref{t1} cannot be said to
strengthen Theorem~\ref{tA} not only for reasons explained in
footnote~\ref{footn} but also in view of the following important remark.
Theorems~\ref{t1} and~\ref{tA} deal with problems in which the objective
functionals are defined differently. In particular, it may happen that for
the same parameters of the problem, an optimal solution exists in one
problem and does not exist in the other, or that optimal solutions exist in
both problems but are different. Nevertheless, the hypothesis in
Theorem~\ref{t1} concerning the behavior of the control system at infinity
seems to be essentially weaker than that in Theorem~\ref{tA}. As an
illustration, we give the following example.
\end{rem}

\begin{exa}\label{ex1}
Consider the problem
\begin{gather}\label{e1}
  \int_0^\infty \frac{u(t)}{t+1}\,dt\to{\max},\\[3pt] \label{e2}
  \dot x(t)=u(t)\qq\text{for a.e. }\, t\in\R_,\\[5pt] \label{e3}
  x(t)\in[-t,t]\cap[-1,1],\qq u(t)\in [-1,1]\qq\text{for a.e. }\, t\in\R_+.
\end{gather}
It is clear that $x(0)=0$ and the absolute value of the integrand
in~\eqref{e1} is bounded by~$1/(t+1)$ for every admissible pair~$(x,u)$.
All local conditions~(i)--(x) and~(xi$'$) are satisfied. Let us show that
condition~(xii$''$) also holds:
\begin{multline}\label{xii''}
  \int_T^{T'}\frac{u(t)}{t+1}\,dt
  =\int_T^{T'}\frac{\dot x(t)}{t+1}\,dt
  =\frac{x(T')}{T'+1}-\frac{x(T)}{T+1}+\int_T^{T'}\frac{x(t)}{(t+1)^2}\,dt \\
  \le \frac{1}{T'+1}+\frac{1}{T+1}+\int_T^{T'}\frac{dt}{(t+1)^2}
  =\frac{2}{T+1}\qq \forall T>0.
\end{multline}

Thus, if we consider the functional \eqref{e1} as an improper integral,
i.e., in the sense of~\eqref{J}, then we can apply Theorem~\ref{t1}, which
guarantees the existence of an optimal solution.

This optimal solution can easily be found explicitly. Indeed, since
$$
  \lim_{T\to\infty}\int_0^{T}\frac{u(t)}{t+1}\,dt
  =\lim_{T\to\infty}\left(\frac{x(T)}{T+1}+\int_0^T\frac{x(t)}{(t+1)^2}\,dt\right)
  =\lim_{T\to\infty}\int_0^T\frac{x(t)}{(t+1)^2}\,dt,
$$
it suffices to maximize~$x(t)$ at every~$t$ (which is possible here), i.e.,
set $u_*(t)=1$ for $t<1$ and $u_*(t)=0$ for $t\ge 1$. The corresponding
optimal trajectory is $x_*(t)=\min\{t,1\}$.

Since the integrand is positive, by Remark \ref{r1} this solution is also
optimal in the case when the objective functional is understood in the
sense of~\eqref{+-}. Let us show that nevertheless Theorem~\ref{tA} is
inapplicable in this case for any~$\al$ (except for~$\al$ equal to the
exact value~$\al_*$ ($=\ln2$) of the functional on the optimal solution,
but in this case the theorem is almost worthless, because the
set~$\Om_{\al_*}$ consists of a single admissible pair). The reason is that
condition~(xi) of strong uniform integrability does not hold for
$\al<\al_*$. Let us demonstrate this.

Consider first an admissible pair with $u(t)=\cos t$, i.e., the pair
$$
  u_0(t)=\cos t, \qq
  x_0(t)=\sin t,\qq t\ge 0.
$$
Then
\begin{equation}\label{int=8}
  \int_0^\infty \left(\frac{u_0(t)}{t+1}\right)^{\!+}dt
  =\int_0^\infty \frac{\max\{\cos t,0\}}{t+1}\,dt=+\infty;
\end{equation}
i.e., no family of functions containing the integrand in~\eqref{int=8} can
be strongly uniformly integrable.

To show that condition~(xi) is violated even for admissible pairs for which
the value of the functional (in any sense) is close to the optimal value,
it suffices to construct such an admissible pair from pieces:
\begin{itemize}
\item first, on a sufficiently large interval $[0,T_1]$ with $T_1=\pi/2+2\pi k$, $k\in\N$,
use the optimal control~$u_*$ and follow the optimal trajectory~$x_*$;
\item second, on a sufficiently large interval $[T_1,T_2]$, use the
control~$u_0$ and follow the trajectory~$x_0$ (since $x_0(T_1)=1=x_*(T_1)$,
we can switch from one trajectory to the other);
\item for $t>T_2$, use the control~$u=0$.
\end{itemize}
Due to the vanishing control on the last interval, the value of the
functional (in any sense) on such an admissible pair is finite. By virtue
of estimate~\eqref{xii''} (note that replacing $u$ with~$-u$ changes the
trajectory~$x$ to~$-x$, so estimate \eqref{xii''} also holds for the
absolute value of the integral on the left-hand side), the value of the
functional (in any sense) on such a pair differs from the optimal value by
at most~$4/(T_1+1)$. Choosing a sufficiently large~$T_2$ (depending
on~$T_1$), we can make the integral analogous to~\eqref{int=8} as large as
desired. This means that condition~(xi) of strong uniform integrability
does not hold for~$\Om_\al$ for any~$\al<\al_*$.
\end{exa}

\begin{rem}
A similar example can be constructed without state constraints. For
example, it suffices to replace $u(t)$ with $u(t)(1-x(t)^2)$ in~\eqref{e1}
and \eqref{e2} and introduce the initial condition $x(0)=0$ in \eqref{e3}
instead of the state constraint.
\end{rem}

\begin{rem}
For the problem considered in Example~\ref{ex1}, the existence result from
\cite[Theorem~7.9]{Bo} is also inapplicable, because it requires that there
should be a locally integrable function $\la\colon\R_+\to\nobreak\R$ with
finite improper integral $\int_0^\infty\la(t)\,dt$ that would majorize the
integrand in the objective functional for all admissible pairs
in~$\wt\Om_\al$. It is clear that there is no such a function in our
problem for $\al<\al_*$ (while for $\al=\al_*$, as mentioned above, the
set~$\wt\Om_\al$ consists of a single pair and the theorem becomes almost
worthless).
\end{rem}

\end{document}